\newtheorem{thm}{Theorem}
\newtheorem{cor}[thm]{Corollary}
\newtheorem{rem}[thm]{Remark}
\newtheorem{ex}[thm]{Example}
\newcommand{\F}{\mathbb{F}}
\newcommand{\Fq}{\mathbb{F}_q}
\newcommand{\Fp}{\mathbb{F}_p}
\newcommand{\On}{{\bf{O}_n}}
\newcommand{\Pqn}{{\mathcal{P}}_{q,n}}
\newcommand{\Fqn}{\mathcal{F}_{q,n}}
\newcommand{\Cqd}{\mathcal{C}_{q,d}}
\begin{document}

\title{A Note on Value Sets of Polynomials over Finite Fields}

\date{\empty}
\author{Leyla I\c s\i k,  Alev Topuzo\u glu}

\maketitle

\noindent
Sabanc\i~ University, Orhanl\i, 34956 Tuzla, \.Istanbul, Turkey\\
E-mail: \{isikleyla, alev\}@sabanciuniv.edu\\

\begin{abstract}

Most results on the value sets $V_f$ of polynomials $f \in \mathbb{F}_q[x]$ relate the cardinality $|V_f|$ to the degree of $f$. 
In particular, the structure of the spectrum of the class 
of polynomials 
of a fixed degree $d$
is rather well known. 

We 
consider a class $\mathcal{F}_{q,n}$ of polynomials, which we obtain by modifying linear permutations at $n$ points. 
The study of the spectrum of $\mathcal{F}_{q,n}$ enables us to obtain a simple description of polynomials $F \in \mathcal{F}_{q,n}$ with prescribed $V_F$, especially 
those avoiding a given set, like cosets of subgroups of the multiplicative group $\mathbb{F}_q^*$. 
The 
value set count for such $F$ can also be determined. This yields 
polynomials with evenly distributed values, 
which have small maximum count. 

\end{abstract}


{\bf Keywords}\\
Value set of a polynomial, spectrum, value set count, maximum count, permutation polynomial.


{\bf Mathematical Subject Classification}\\
11T06

\section{Introduction}

Let $\Fq$ be the finite field with $q=p^r$ elements, where $p$ is a prime, $r \ge 1$.  
The {\it value set} of a polynomial $f \in \Fq[x]$ is the set $V_{f}= \{f(c): c\in \Fq\}$. 
Given a class of polynomials $ \mathcal{C}$, the set $v( \mathcal{C}) = \{|V_{f}| : f\in  \mathcal{C}\}$ is called the {\it spectrum} of $ \mathcal{C}$. Throughout we assume that polynomials are reduced $\mod x^q-x$, i.e. have degree $\leq q-1.$ 

Most previous results on value sets are concerned with the class $\Cqd$ of polynomials over $\Fq$ of degree $d$. When $d\leq 4$, the complete spectrum $v(\Cqd)$ is known, see for instance Section 8.3 of \cite{MuPa2013}. 
When $f \in \Cqd$ is not a permutation, i.e. $|V_{f}|< q$, Wan showed in \cite{Wan1992} that
\begin{equation}\label{eqn:wan}
|V_f|\leq q-\left \lceil \frac{q-1}{d}\right \rceil.
\end{equation}

For many interesting results on $v( \Cqd)$, we refer to \cite[8.3.3]{MuPa2013} and \cite[8.2]{shparlinski} and the references therein. For instance, it is shown in \cite {Gomez88} that the gap described in (\ref{eqn:wan}) between permutations and non-permutations is not the only gap in $v(\Cqd)$.  

In a recent paper \cite{MuWaWa2014}, Mullen et al. obtained an upper bound, similar to that in (\ref{eqn:wan}), for non-permutation polynomials $f$ of a fixed {\it index} $\ell$; 
$$|V_f|\leq q-\frac{q-1}{\ell}. $$
We recall that 
any non-constant polynomial $f\in \Fq[x]$ of degree $\leq q-1$ can be written uniquely as $f(x)=a\big(x^{r}g(x^{(q-1)/\ell)})\big)+b$, with a monic polynomial $g(x)$, where $\ell$  is the index of $f$, see \cite{AkGhWa2009}. 

The well-known Lagrange Interpolation Formula is an explicit formula for polynomials with any given value set. On the other hand, giving a simple description of polynomials with specific value sets, and/or determining their properties have been of interest. Bir\'{o}, for instance, shows in \cite{Biro2000} that polynomials over the prime field $\Fp$ taking only two non-zero values have degree $\geq 3(p-1)/4,$ except for some special polynomials. The so-called {\it minimal value set polynomials} $f \in \Cqd$ are those, satisfying $|V_f|=\lceil{q/d}\rceil$; the 
minimum value in $v(\Cqd)$.  
They are obtained by a variety of methods, see for example \cite{BoCo2013, CaLeMi1961, ChGoMu1988, GoMa1988}.  
Chou et al. study several classes of polynomials, the value sets of which lie in a subfield, see \cite{Chou}. Cusick, in \cite {Cusick}, determines five kinds of polynomials over
$\F_{2^n}$ with evenly distributed values, by using results from the theory of crosscorrelation of binary $m$-sequences. The 
distribution of the values of a polynomial is described in terms of the
{\it value set count} and {\it maximum count} (the definitions are given in Section 2).
It is shown for instance that if $n \equiv 0$ mod $4$, and
$f(x)= (x^{2^{n/2}}+x)^2/(x^2+x)$  is in $\F_{2^{n}}[x]$, then $0$ appears  $2^{n/2}$ times in $V_f$, regarded as a multiset, while all the other
elements appear twice, see \cite {Cusick}.
For further results on polynomials with particular value sets, we refer the reader to \cite[8.3.3]{MuPa2013}. 

In this note we consider a class $\mathcal{F}_{q,n}$ of polynomials over $\Fq$, $q \geq 5$, which we obtain by modifying linear permutations at $n$ points, where $q$ is an odd prime power.
As one might expect, the spectrum of $\mathcal{F}_{q,n}$ is of a very different nature, when compared with that of $\Cqd$. We give a simple description of polynomials $F \in \mathcal{F}_{q,n}$ with small and large value sets, for example with $|V_F|=2, 3, 4, q-3,q-2.$ We determine
$V_F$ explicitly for such $F$. We also show how to obtain polynomials, whose values avoid prescribed sets, in particular any coset of any subgroup of the multiplicative group $\Fq^*$. We give value set counts 
for polynomials $F$ and 
hence find polynomials with evenly distributed values, extending some results of Cusick mentioned above, 
to odd characteristic: See Corollary \ref{complete} \textit{(ii)}, \textit{(iii)} and Remark \ref{maximumcount}. We note that the polynomials in $\mathcal{F}_{q,n}$ may also be permutations, giving rise to
complete mappings, see for instance Theorem \ref{n=4} and Remark \ref{comp}.

We define $\Fqn$, $n \geq 2$ as follows. Start with $g(x)=ax+b \in \Fq[x], ~a, b \neq 0.$
Let $\On$ denote a set of $n$ distinct elements $x_1, x_2, \ldots x_n \in \Fq$ with $x_1=0$ and $x_n=-b/a$.
Consider the set $\Pqn$ of permutations $f$ of $\Fq$ defined by,

\begin{eqnarray}
\label{eq:Pqn}
f(x)=
\left \{
\begin{array}{lll}
g(x) & \mbox{if} \quad x \notin \On,\\
g(x_{i-1}) & \mbox{if} \quad x=x_i \in \On, ~2 \leq i \leq n,\\
g(x_n)=0 & \mbox{if}  \quad x=x_1 \in \On.

\end{array}
\right.
\end{eqnarray}

 Adding the identity permutation to permutations in $\Pqn$, we obtain
 polynomials with a variety of value sets. 
 We put 
$\Fqn= \{F(x)=f(x)+x: f \in \Pqn \}.$ 

This particular choice of the polynomials $f$ (and $F \in \Fqn$) enables us to use the tools that are obtained 
in \cite {aaaw2009}. Indeed for any $f \in \Pqn$, defined as in (\ref{eq:Pqn}), one can uniquely determine a polynomial
$P_n(x)=P_{n}(c_{0},\ldots,c_{n};x)$, which satisfies the following properties. One has  
\begin {equation}
\label{eq:f}
f(\delta)=P_{n}(c_{0},\ldots,c_{n};\delta)=(\ldots((c_0\delta)^{q-2}+c_1)^{q-2} \ldots +c_n)^{q-2}
\end{equation}
for all $\delta \in \Fq$, where $c_{0},\ldots,c_{n} \in \Fq^*$. The recursively defined sequences
\begin{equation}
\label{alpha-beta}
\alpha_{k}=c_{k-1}\alpha_{k-1}+\alpha_{k-2}, \hspace{.2in} 
\beta_{k}=c_{k-1}\beta_{k-1}+\beta_{k-2},~~k \geq 2,
\end{equation}
with $\alpha_{0}=0, \alpha_{1}=c_{0}, \beta_{0}=1,
\beta_{1}=0,$ point to the relation between $f$ and $P_n(x)$ as $a=\alpha_n/\beta_{n+1},~~ b= \beta_n/\beta_{n+1}$. Moreover they satisfy $ \alpha _{n+1}=0$, 
 and $\alpha_k \neq 0$ when $1 \leq k \leq n$. 

It is easy to see that using the representation (\ref{eq:f}) for $f$, the set 
$\On$ above can also be expressed as 
$\On= \{x_{i}:
x_{i}=\frac{-\beta_{i}}{\alpha_{i}}, \; i=1, \ldots,n \}$.
A procedure that yields 
$P_{n}(c_{0},\ldots,c_{n};x)
$ from a given polynomial
$g(x)$ and the set $\On$ is essentially given in \cite{CeMeTo2008}, and will be summarized in Section 4. See also Example \ref{Ex2}, below. 

Conversely, consider a permutation polynomial 
$P_n(x)=P_{n}(c_{0},\ldots,c_{n};x)=(\ldots((c_0x)^{q-2}+c_1)^{q-2} \ldots +c_n)^{q-2}~\in \Fq[x]$,
and the elements $\alpha_k, \beta_k, 1 \leq k \leq n,$ which are defined recursively by 
(\ref{alpha-beta}). Put $\mathbf{\overline{O}_{n}}=\{x_{i}:
x_{i}=\frac{-\beta_{i}}{\alpha_{i}}, \; i=1, \ldots,n \}$. If $|\mathbf{\overline{O}_{n}}|=n$, 
$\alpha_k \neq 0$ for $1 \leq k \leq n$ and $\alpha _{n+1}=0$, then there are uniquely determined
polynomials $g(x)$ and $f(x)$, where $g(x)=(\alpha_n/\beta_{n+1})x+\beta_n/\beta_{n+1}$, $\On=\mathbf{\overline{O}_{n}}$, and
$f(x)$ as defined in (\ref{eq:Pqn}) that satisfies $f(\delta)=P_n(\delta)$ for every $\delta \in \Fq$. We refer the reader to \cite{aaaw2009} for details. Note that
in the terminology of \cite{aaaw2009}, the set 
$$ 
\mathbf{\overline{O}_{n+1}}=\mathbf{\overline{O}_{n}}\cup \{x_{n+1}\}= 
\{x_{i}:
x_{i}=\frac{-\beta_{i}}{\alpha_{i}}, \; i=1, \ldots,n \} \cup \{ \infty \} \subset
\mathbb{F}_{q} \cup \{ \infty \}
$$
is called the set of {\it {poles}}, where $x_{n+1}$ is the pole at infinity.

A representation similar to that in (\ref{eq:f}) is possible for any permutation $P$ of $\Fq$, and it leads to the concept 
of the \textit{Carlitz rank } of $P$. This notion was introduced in \cite{aaaw2009}, and is particularly
useful when $n$ is small with respect to $q$, since it enables expressing permutations as fractional linear transformations, except at at most $n+1$
elements of $\Fq$, see \cite{ GoOsTo2014, PaTo2014, Topuzoglu}.

\section{The Spectrum  $\mathnormal{v}(\mathcal{F}_{q,n})$}

\begin{thm}
The spectrum $\mathnormal{v}(\mathcal{F}_{q,n})$ satisfies
$$\mathnormal{v}(\mathcal{F}_{q,n}) \subset \left \{2,3,\ldots,n+1,q-n,q-n+1,\ldots,q-2,q\right \}.$$
\end{thm}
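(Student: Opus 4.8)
The plan is to analyze directly the multiset of values of $F(x) = f(x) + x$ for $f \in \Pqn$, using the explicit description of $f$ in \eqref{eq:Pqn}. Write $\On = \{x_1, \dots, x_n\}$ with $x_1 = 0$, $x_n = -b/a$, and $g(x) = ax + b$. On the large set $\Fq \setminus \On$, which has $q - n$ elements, we have $F(x) = ax + b + x = (a+1)x + b$. The behaviour here splits into two cases: if $a \neq -1$, then $(a+1)x + b$ is a bijection from $\Fq \setminus \On$ onto a set of exactly $q-n$ distinct values; if $a = -1$, then $F$ is constant, equal to $b$, on all of $\Fq \setminus \On$, contributing just a single value. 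On the small set $\On$, $F$ takes at most $n$ further values: namely $F(x_i) = g(x_{i-1}) + x_i$ for $2 \le i \le n$ and $F(x_1) = g(x_n) + x_1 = 0 + 0 = 0$.

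First I would establish the upper bound $|V_F| \le n+1$. This is immediate in the case $a = -1$: the values on $\Fq \setminus \On$ contribute $1$, and the at most $n$ values on $\On$ bring the total to at most $n+1$. (When $a \neq -1$ this crude split only gives $|V_F| \le (q-n) + n = q$, which is why the large-value regime needs the complementary argument below.) Then I would establish the upper bound in the large regime: on $\Fq \setminus \On$ the $q-n$ values $(a+1)x+b$ are distinct, so $V_F$ already has $\ge q-n$ elements, and each of the $\le n$ points of $\On$ can only add to this, giving $|V_F| \le q$. To pin down the range more tightly I would count collisions. The key observation is that a value $v \in V_F$ with multiplicity has two sources: either two points of $\Fq\setminus\On$ (impossible when $a \ne -1$ since the restriction is injective there), a point of $\Fq \setminus \On$ and a point of $\On$, or two points of $\On$. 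When $a \ne -1$, every "lost" value (i.e.\ every drop of $|V_F|$ below $q$) comes from either $F(x_i)$ coinciding with $(a+1)y+b$ for some $y \notin \On$, or with another $F(x_j)$. Since there are only $n$ points in $\On$, the number of such coincidences is at most $n$, so $|V_F| \ge q - n$; combined with $|V_F|\le q$ this gives $v(\mathcal F_{q,n}) \cap \{q-n,\dots,q\}$ as the only options in the large regime.

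To rule out $q-1$ (so that the gap between $q-n+\cdots+q-2$ and $q$ in the statement is genuine), I would argue as follows: if exactly one coincidence occurs, it is caused by exactly one index $i$ with $F(x_i)$ equal to a value attained on $\Fq\setminus\On$ or on $\On\setminus\{x_i\}$. Using the relations $a = \alpha_n/\beta_{n+1}$, $b = \beta_n/\beta_{n+1}$ from \eqref{alpha-beta} together with $\alpha_{n+1} = 0$ and $\alpha_k \neq 0$ for $1 \le k \le n$, and the description $x_i = -\beta_i/\alpha_i$ of the elements of $\On$, one can show that coincidences come in forced pairs: the structure of the recursion \eqref{alpha-beta} links the value at $x_i$ to the value at a neighbouring point, so a single isolated collision is impossible. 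This is the step I expect to be the main obstacle — the case $a = -1$ must be handled separately and requires showing both that $|V_F| \le n+1$ is the correct bound (clear) and that all of $2, 3, \dots, n+1$ can in principle occur but nothing outside that range does, which again uses the pole structure to control how the $n$ values $F(x_i)$ on $\On$ can collide with each other and with the constant $b$.

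Finally I would assemble the two regimes: when $a = -1$ we get $|V_F| \in \{2, \dots, n+1\}$, and when $a \neq -1$ we get $|V_F| \in \{q-n, \dots, q-2, q\}$, and the union of these two sets is exactly the claimed superset. The only genuinely delicate points are the exclusion of $q-1$ and the verification that the collision count on $\On$ is bounded by $n-1$ rather than $n$ in the relevant subcases; both reduce to tracking the $\alpha_k, \beta_k$ recursions, so I would isolate a short lemma on these sequences (e.g.\ $\alpha_k\beta_{k+1} - \alpha_{k+1}\beta_k = \pm\, c_0$, a Wronskian-type identity) and invoke it repeatedly.
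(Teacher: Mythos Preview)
Your case split ($a=-1$ versus $a\neq -1$) and the resulting bounds $2\le|V_F|\le n+1$ and $q-n\le|V_F|\le q$ are exactly what the paper does, and your reasoning for those parts is fine. Two points need correction.

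\medskip

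\textbf{The lower bound $|V_F|\ge 2$ when $a=-1$.} You never argue this. The paper observes that $F(x_1)=0$ and $F(\delta)=b$ for $\delta\notin\On$; since $b\neq 0$ by hypothesis, both values are present and distinct, so $|V_F|\ge 2$. This is easy but should be stated.

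\medskip

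\textbf{Excluding $|V_F|=q-1$.} This is where your proposal has a genuine gap. You propose to show that ``coincidences come in forced pairs'' via the recursion \eqref{alpha-beta} and a Wronskian-type identity for $\alpha_k,\beta_k$. That identity is real, but it does not by itself force collisions of $F$-values to pair up, and you give no mechanism connecting the determinant identity to the combinatorics of $V_F$. As written, this step is an assertion, not an argument.

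The paper's exclusion of $q-1$ is far simpler and uses none of the Carlitz-rank machinery. Since $f$ is a permutation of $\Fq$,
\[
\sum_{c\in\Fq} F(c) \;=\; \sum_{c\in\Fq} f(c) + \sum_{c\in\Fq} c \;=\; 0+0 \;=\; 0.
\]
If $|V_F|=q-1$, then exactly one value $\alpha$ is attained twice and exactly one value $\beta$ is missed, so
\[
\sum_{c\in\Fq} F(c) \;=\; \alpha + \sum_{\gamma\in\Fq\setminus\{\beta\}}\gamma \;=\; \alpha-\beta,
\]
forcing $\alpha=\beta$, a contradiction. Replace your recursion argument with this two-line computation and the proof is complete.
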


\begin{proof} 
Let $F (x) \in \Fqn$ be arbitrary. We recall that $a, b \neq0$,
\begin{equation}
\label{F}
F(\delta)= (a+1)\delta+b ~~\textrm{for}~ \delta \in \Fq \setminus \On, ~\textrm{and}~ F(x_i)=ax_{i-1}+x_i+b,~ i=2 \ldots n.
\end{equation}
We also note that 
$|\On|=n$ and $F(x_1)=F(0)=g(x_n)+x_1=g(-b/a)=0$, see (\ref{eq:Pqn}). 

Assuming $a=-1$, 
we get $F(\delta)=b=x_n$ for all $\delta \in \Fq \setminus \On$.
The cardinality of the image $F(\On)$ may vary between $1$ and $n$. 
The element $0$ is in $V_F$ since $F(x_1)=0$, while $x_n$ is also in $V_F$ and $x_n  \neq 0.$ 
Hence $2 \leq |V_F| \leq n+1.$ 

If $a \neq -1$, then $|F(\Fq \setminus \On)|=q-n$ and $|V_F|=q-n$ if $F(\On) \subset F(\Fq \setminus \On)$. Therefore depending on the intersection of the two sets $F(\On)$ and
$F(\Fq \setminus \On)$, the cardinality $|V_F|$ may range between $q-n$ and $q$, except that $|V_F| \neq q-1$ for any
$F \in \Fqn$.

Suppose that $|V_F|=q-1$, so that
$q-2$ elements appear once in $V_F$, regarded as a multiset, one element $\alpha$ appears twice and
one element $\beta$ does not appear. But in this case 
$\sum_{c \in \Fq} F(c)= \sum_{c\in \Fq} f(c)+\sum_{c\in \Fq} c=0,$
while
$
\sum_{c \in \Fq} F(c)=\alpha+\sum_{c\in \Fq\setminus\{\beta\}} c =\alpha-\beta.
$
 \end{proof}

\begin{ex}
We remark that all the values listed in Theorem 1 may actually be attained. This example with $q=13$, $n=6$ shows that
there are $F \in \mathcal{F}_{13,6}$, for which $ |V_F| $ takes any value between $3$ and $q-2=11$ and also $q=13.$ 
In what follows $F_i(x) \in \mathcal{F}_{13,6}$ denotes a polynomial with $|V_{F_i}|=i$
for $1 \leq i \leq 11$, and $i=13.$ We first fix a linear polynomial $g_i(x)$ and the set ${\bf{O}_6}^{(i)}$ to obtain $f_i(x)$ as in 
(\ref{eq:Pqn})
so that
$F_i(x)=f_i(x)+x \in \mathcal{F}_{13,6}$ satisfies $|V_{F_i}|=i$. We use the notation 
$P_6^{(i)}(c_o, \ldots, c_6;x)$ to specify the polynomial $P_6(x)$, satisfying $f_i(\delta)=P_6(\delta)=P_6^{(i)}(\delta)$
for any $ \delta \in \Fq.$
For instance the polynomial
$P_6^{(13)}(7,2,3,6,10,5,2;x)$, corresponding to $g_{13}(x)=5x+3$ and ${\bf{O}_6}^{(13)}=\{0,12,1,8,6,2 \} $ yields $F_{13}(x)$, such that
$|F_{13}({\bf{O}_6}^{(13)})|=6$, $|F_{13}(\Fq\setminus {\bf{O}_6}^{(13)})|=7$ and $F_{13}({\bf{O}_6}^{(13)})\cap F_{13}(\Fq\setminus {\bf{O}_6}^{(13)})=\emptyset$. 

The polynomials $g_{11}(x)= 7x+12,$ $ g_{10}(x)=9x+3,$ $ g_9(x)= x+4,$ $ g_8(x)=10x+12,$ with
${\bf{O}_6}^{(11)}=\{0,4,8,11,7,2 \},$ $ {\bf{O}_6}^{(10)}=\{0,6,11,1,7,4 \}$, ${\bf{O}_6}^{(9)}=\{0,11,12,4,3,9 \}$, $ {\bf{O}_6}^{(8)}=\{0,6,3,12,7,4\}$ produce examples of $F_i(x)$ with $|V_{F_i}|=i,$ $8\leq i\leq11$. The corresponding permutations are
$P_6^{(11)}(5,11,1,4,7,9,$\\$3;x)$, $P_6^{(10)}(1,2,8,3,7,6,1;x)$, $P_6^{(9)}(1,7,2,1,11,9,2;x)$, $P_6^{(8)}(1,2,7,8,5,4,10;x)$.

The other end of the spectrum, i.e. the values $3, 4, 5, 6, 7$ can be obtained by choosing 
$g_3(x)=12x+5$, $g_4(x)=12x+10$, $g_5(x)=12x+6$, $g_6(x)=12x+10$, $ g_7(x)=12x+8$ and ${\bf{O}_6}^{(3)}=\{0,8,3,2,10,5\}$, 
${\bf{O}_6}^{(4)}=\{0,1,4,7,9,10 \}$, ${\bf{O}_6}^{(5)}=\{0,3,10,2,9,6 \}$, ${\bf{O}_6}^{(6)}=\{0,8,2,11,3,10 \}$, ${\bf{O}_6}^{(7)}=\{0,4,7,9,10,8 \}$.  
The corresponding permutations
are
$P_6^{(3)}(1,8,3,9,4,10,5;x)$, $ P_6^{(4)}(4,3,1,5,12,$\\$5,1;x)$, $P_6^{(5)}(10,3,2,9,1,11,4;x)$, $P_6^{(6)}(9,11,2,7,8,4,2;x)$, $ P_6^{(7)}(1,3,5,4,2,11,\\$ $6;x)$. 

The value $|V_F|=2$ is attained when char$(\Fq)=p=n$, and the linear polynomial $g(x)$ and the set 
${\bf{O}_n}$ are chosen as in Theorem 9 (iv) below.   For instance, $P_5(-1,-1/b,2b,-2/b,2b,-1/b;x)$ over $F_{25}$ yields a polynomial $F$ with $|V_F|=2$, for any $b\in \mathbb{F}_{25}^*$.
\end{ex}

In order to analyse the spectrum $\mathnormal{v}(\mathcal{F}_{q,n})$ in more detail we follow \cite{Cusick}
and define the value set count and the maximum count.
For $f \in \Fq[x]$, the value set count is defined in terms of the pre-images of elements in $\Fq$.
It is the vector $(v_0, v_1, \ldots, v_M)$, where 
$v_i=|\{\alpha \in \Fq: |f^{-1}(\alpha)|=i \}|,$ and $M= \max_{\alpha \in \Fq}\{|f^{-1}(\alpha)|\}$
is the maximum count for $V_f.$ When $v_k=0$ for $1 < i \leq k \leq j < M,$ and $v_{i-1} \neq 0,~v_{j+1} \neq 0$, we write
$(v_0, v_1, \ldots, v_{i-1},v_{j+1},\ldots, v_M).$ When we wish to specify the elements in $V_f$ with a given \textit{multiplicity},
we use the notation $m(\beta)=|f^{-1}(\beta)|$ for $\beta \in V_f.$

\section{The cases $n=2,~3$}
When  $n=2, 3,$ it is easy to describe the polynomials $F \in \Fqn$ and and their value sets $V_F$ explicitely. 
\begin{thm}
The spectrum of the family $\mathcal{F}_{q,2}$ is 
$$\mathnormal{v}(\mathcal{F}_{q,2})= \{3, q-2\}.$$
\end{thm}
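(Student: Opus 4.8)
The plan is to specialize the general analysis in the proof of Theorem 1 to the case $n=2$ and show that neither endpoint fails and that nothing in between can occur. Since Theorem 1 already gives $v(\mathcal{F}_{q,2}) \subset \{2,3,q-2,q\}$, it suffices to rule out the two extreme values $2$ and $q$, and to exhibit an $F$ realizing each of $3$ and $q-2$.

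First I would set up the explicit description of an arbitrary $F \in \mathcal{F}_{q,2}$ from \eqref{F}. Here $\mathbf{O}_2 = \{x_1, x_2\} = \{0, -b/a\}$, and the two nontrivial values are $F(x_1) = F(0) = 0$ and $F(x_2) = F(-b/a) = a x_1 + x_2 + b = -b/a + b = b(a-1)/a$ (using $x_1 = 0$), while $F(\delta) = (a+1)\delta + b$ for the remaining $q-2$ values of $\delta$. Then I would handle the case $a = -1$: by the argument in Theorem 1, $F(\delta) = b$ for all $\delta \notin \mathbf{O}_2$, so $V_F = \{0, b, F(x_2)\}$; since $x_2 = -b/a = b \neq 0$ one checks $F(x_2) = b(a-1)/a = 2b \neq 0$ as $q$ is odd, and $2b \neq b$; hence $|V_F| = 3$ exactly, never $2$. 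This rules out the value $2$ (consistent with Example 2's remark that $|V_F|=2$ requires $p = n$, impossible for $n=2$ in odd characteristic) and simultaneously produces a concrete polynomial attaining $3$.

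Next, for $a \neq -1$ I would argue as in Theorem 1 that $|F(\mathbb{F}_q \setminus \mathbf{O}_2)| = q-2$, and that $|V_F| \in \{q-2, q\}$ with $q-1$ excluded by the summation argument already given. To exclude $q$: the two "extra" potential values are $F(0) = 0$ and $F(x_2)$. We have $F(0) = 0 = (a+1)\delta + b$ precisely when $\delta = -b/(a+1)$, which lies in $\mathbb{F}_q \setminus \mathbf{O}_2$ unless it equals $0$ or $-b/a$; since $b \neq 0$ it is not $0$, and it equals $-b/a$ only if $a = a+1$, impossible. So $0 \in F(\mathbb{F}_q \setminus \mathbf{O}_2)$ always, forcing $|V_F| \leq q-1$, hence $|V_F| = q-2$ for every $a \neq -1$. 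This pins down the whole spectrum as $\{3, q-2\}$, and any $g(x) = ax+b$ with $a \neq 0, -1$, $b \neq 0$ gives an $F$ with $|V_F| = q-2$.

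The only mild obstacle is bookkeeping: making sure $x_1 = 0$ and $x_2 = -b/a$ are genuinely distinct (which needs $b \neq 0$, given) and tracking that the candidate "new" values $0$ and $F(x_2)$ really do or do not collide with the affine image, using repeatedly that $q$ is odd and $a, b \neq 0$. No deep input is needed beyond Theorem 1; the content is just a clean case split on whether $a = -1$.
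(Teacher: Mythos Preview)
Your proof is correct and follows essentially the same case split as the paper: both handle $a=-1$ by explicitly listing $V_F=\{0,b,2b\}$ (the paper writes this as $\{0,-c_1,-2c_1\}$ using the $P_n$-parameters), and for $a\neq -1$ both show $|V_F|=q-2$. Your treatment of the second case is slightly more economical: rather than computing $V_F$ explicitly and splitting further into $a=1$ versus $a\neq\pm 1$ as the paper does, you simply observe that $F(x_1)=0$ always collides with the affine image at $\delta=-b/(a+1)\notin\mathbf{O}_2$, which together with the exclusion of $q-1$ from Theorem~1 pins down $|V_F|=q-2$ without further work.
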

\begin{proof} Let $F \in \mathcal{F}_{q,2}$ be arbitrary with $F(x)=f(x)+x, ~f \in \mathcal{P}_{q,2}.$ 
It can be seen easily that $F$ can be represented as
\begin{equation}
\label{F2}
F(x)=\left(\Big((c_{0}x)^{q-2}+c_{1}\Big)^{q-2}-\frac{1}{c_{1}}\right)^{q-2}+x,
\end{equation}
where $a=-c_0c_1^2, ~b=-c_1,$
$x_2=-1/c_0c_1$ and $F(x_{2})=-(c_{0}c_{1}^{2}+1)/(c_{0}c_{1}).$
When $a=-1,$ 
one has 
$F(\delta)=-c_{1}$ for any $\delta \in \Fq^{*}\setminus\{x_{2}\}$, and $F(x_{2})=-2c_{1}$. 
Therefore
$V_{F}=\{0,-c_{1},-2c_{1}\}$. 

The cases $a=1$ and $a \neq \pm1$, or with the above notation $c_{0}c_{1}^{2}=-1$ and $c_{0}c_{1}^{2}\neq \pm1$ yield
polynomials $F$ in (\ref{F2}) with $|V_F|=q-2.$ By straightforward calculations one can see that
if $c_{0}c_{1}^{2}\neq \pm1$, the polynomial $F(x)$ in (\ref{F2}) has the value set
$V_F= \Fq \setminus \{ -c_1, -1/c_0c_1)\}$, and if $c_{0}c_{1}^{2}=-1$, then
$V_ F=\Fq\setminus\{c,-c\}$ where $c=c_1$. 

\end{proof}

\begin{cor} 
\label{rem:F2}
The following polynomials have value sets of cardinalities $3$ and $q-2.$
\begin{itemize}
\item[(i)]
Any polynomial $F \in \mathcal{F}_{q,2}$  of the form
$$F(x)=\left ( \left (\left (\frac{1}{c^{2}}x\right )^{q-2}+c\right )^{q-2}-\frac{1}{c}\right )^{q-2}+x,$$
where $c\in \Fq^*$, has the value set $V_F=\{0,-c,-2c\}$. 
The value set count for $F$ is $(v_0, v_1, v_{q-2})$,
where $v_0=q-3, v_1=2, v_{q-2}=1$.
\item[(ii)]
Any
polynomial $F \in \mathcal{F}_{q,2}$  of the form
$$F(x)=\left ( \left (\left (\frac{-1}{c^{2}}x\right )^{q-2}+c\right )^{q-2}-\frac{1}{c}\right )^{q-2}+x,$$
has the value set $V_F=\Fq\setminus\{c,-c\}$, where $c \in \Fq^*$ is arbitrary. 
The value set count in this case is $(v_0,v_1,v_3)$ where $v_0=2$, $v_1=q-3$, 
and $v_3=1$. 
\end{itemize}
\end{cor}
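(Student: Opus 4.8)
The plan is to read off both value-set descriptions directly from the preceding theorem and then to compute the two value set counts by an explicit enumeration of preimages. First I would match each polynomial with the normal form (\ref{F2}): in part (i) one has $c_{0}=1/c^{2}$, $c_{1}=c$, hence $a=-c_{0}c_{1}^{2}=-1$, while in part (ii) one has $c_{0}=-1/c^{2}$, $c_{1}=c$, hence $a=-c_{0}c_{1}^{2}=1$, i.e.\ $c_{0}c_{1}^{2}=-1$. Substituting these data into the two cases treated in the proof of that theorem gives $V_{F}=\{0,-c,-2c\}$ in part (i) and $V_{F}=\Fq\setminus\{c,-c\}$ in part (ii), so the descriptions of the value sets need no new argument.

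For the value set count in part (i) I would use (\ref{F}) with $a=-1$. Then $F(\delta)=b=-c$ for every $\delta\in\Fq\setminus\mathbf{O}_{2}$, so the value $-c$ is attained $q-2$ times; moreover $F(x_{1})=F(0)=0$ and $F(x_{2})=ax_{1}+x_{2}+b=-2c$ (using $x_{2}=-b/a=-c$), each attained once. Since $q$ is odd the three elements $0,-c,-2c$ are pairwise distinct, so there is no further coincidence, and the count is $v_{0}=q-|V_{F}|=q-3$, $v_{1}=2$, $v_{q-2}=1$.

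For part (ii) I would again use (\ref{F}), now with $a=1$, so that $F(\delta)=2\delta+b=2\delta-c$ for $\delta\in\Fq\setminus\mathbf{O}_{2}$; since $2\neq0$ in $\Fq$, this map is injective on $\Fq\setminus\mathbf{O}_{2}$ and its image is $\Fq\setminus\{c,-c\}$, the two omitted values being the images of the two arguments $x_{1}=0$ and $x_{2}=c$ that are deleted from the domain. Next $F(x_{1})=0$ and $F(x_{2})=ax_{1}+x_{2}+b=c-c=0$, and the unique $\delta\in\Fq\setminus\mathbf{O}_{2}$ with $2\delta-c=0$ is $\delta=c/2$, which indeed lies outside $\mathbf{O}_{2}=\{0,c\}$ because $q$ is odd. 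Hence $0$ has the three-element preimage $\{0,c,c/2\}$, every other element of $V_{F}=\Fq\setminus\{c,-c\}$ has exactly one preimage, and $c,-c$ are not attained; therefore $v_{0}=2$, $v_{1}=q-3$, $v_{3}=1$.

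All the computations are elementary; the only point needing a little care --- the \emph{main obstacle}, such as it is --- is verifying that the various elements being tracked are genuinely distinct, i.e.\ that $2\neq0$ and $2\neq1$ in $\Fq$. Both hold because $q\ge5$ is an odd prime power, and this is exactly what ensures $x_{1}\neq x_{2}$, $c/2\notin\{0,c\}$, and the separation of $0,-c,-2c$ in part (i); without it the listed value set counts would collapse.
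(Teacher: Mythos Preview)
Your proposal is correct and follows essentially the same route as the paper: the paper's proof is the single line ``Immediate from the above proof by putting $c=c_{1}$,'' and you do exactly this identification of $c_{0},c_{1}$ and then read off the value sets from the two cases $a=-1$ and $c_{0}c_{1}^{2}=-1$ in the proof of Theorem~3. Your explicit enumeration of preimages to justify the value set counts is more detailed than what the paper writes, but is the intended content of ``immediate.'' One cosmetic remark: the condition $2\neq1$ you list at the end is vacuous in any field; the only genuine constraint you use is $2\neq0$, i.e.\ odd characteristic, which suffices for all the distinctness checks (including $c/2\notin\{0,c\}$ and the separation of $0,-c,-2c$).
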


\begin{proof}
Immediate from the above proof by putting $c=c_1.$
\end{proof}

\begin{thm} 
\label{n=4}
The spectrum of the family $\mathcal{F}_{q,3}$ is
\begin{displaymath}
v({\mathcal{F}}_{q,3})=
\left \{
\begin{array}{ll}
\{2,4,q-3, q-2, q\} & \mbox{if} \quad q\equiv 0 \mod 3,\\
\{3,4,q-3, q-2,q\} & \mbox{if} \quad q\equiv 1 \mod 3,\\
\{3,4,q-3,q-2\} & \mbox{if}  \quad q\equiv 2 \mod 3.

\end{array}
\right.
\end{displaymath}
\end{thm}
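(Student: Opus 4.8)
The plan is to follow the same strategy as in the proof of Theorem~6, but now with $n=3$, and to carry out a finer case analysis according to the value of $a$ (equivalently, of the product of the $c_i$'s), since it is precisely the case $a=-1$ that produces small value sets and in which the characteristic of $\Fq$ enters. First I would write down the explicit form of an arbitrary $F\in\mathcal{F}_{q,3}$, $F(x)=f(x)+x$ with $f\in\mathcal{P}_{q,3}$, using the representation $(\ref{eq:f})$; this gives $F(x)=\bigl(((((c_0x)^{q-2}+c_1)^{q-2}+c_2)^{q-2}+c_3)^{q-2}\bigr)+x$ (reduced appropriately), and from $(\ref{alpha-beta})$ I would record $a=\alpha_3/\beta_4$, $b=\beta_3/\beta_4$ together with the three poles $x_1=0$, $x_2=-\beta_2/\alpha_2$, $x_3=-\beta_3/\alpha_3=-b/a$, subject to $|\On|=3$, $\alpha_k\neq 0$ for $1\le k\le 3$ and $\alpha_4=0$.

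Next I would split into the two regimes exactly as in Theorem~6. When $a\neq-1$ we have $|F(\Fq\setminus\On)|=q-3$ and $F(x)=(a+1)x+b$ off the three poles; the three ``exceptional'' values $F(x_2)=ax_1+x_2+b=x_2+b$, $F(x_3)=ax_2+x_3+b$, $F(x_1)=0$ may or may not already lie in the affine image $(a+1)\Fq+b$, so $|V_F|$ a priori ranges over $\{q-3,q-2,q-1,q\}$; the value $q-1$ is then excluded by the sum-of-values argument of Theorem~6 (the multiset $\sum_{c}F(c)=\sum_c f(c)+\sum_c c=0$ forces a contradiction if exactly one value is doubled and one omitted). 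Showing that $q-3$, $q-2$ and $q$ are all genuinely attained (the latter only when $q\not\equiv 2\bmod 3$, see below) requires exhibiting, for a given residue of $q$ mod $3$, choices of $g$ and $\On$ realizing each; I would argue this by a dimension/counting argument on the free parameters $c_0,c_1,c_2,c_3$ rather than by explicit tables, but a small explicit witness in each residue class is the cleanest.

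The heart of the matter is the case $a=-1$. Then $F(\delta)=b=x_3$ for every $\delta\in\Fq\setminus\On$, and the remaining values are $F(x_1)=0$, $F(x_2)=x_2-x_3$ (using $F(x_2)=ax_1+x_2+b$ with $x_1=0$, $b=x_3$, $a=-1$... here one must be careful: $F(x_2)=a x_1+x_2+b=x_2+b=x_2+x_3$ — I would recompute this carefully from $(\ref{F})$), and $F(x_3)=ax_2+x_3+b=-x_2+x_3+x_3=2x_3-x_2$. Thus $V_F\subseteq\{0,\ x_3,\ x_2+x_3,\ 2x_3-x_2\}$, so $|V_F|\le 4$, and generically $|V_F|=4$. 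The cardinality drops below $4$ exactly when these four field elements collide, and this is where the arithmetic of $3$ and the characteristic intervene: forcing $|V_F|=3$ costs one coincidence among $\{0,x_3,x_2+x_3,2x_3-x_2\}$, while $|V_F|=2$ requires two independent coincidences, and one checks that the simultaneous system (e.g. $x_2+x_3=0$ and $2x_3-x_2=0$, giving $3x_3=0$) is solvable with $x_2,x_3\neq0$ and distinct from each other and from $0$ precisely when $3=0$ in $\Fq$, i.e. $q\equiv0\bmod 3$. This explains why $2\in v(\mathcal{F}_{q,3})$ only in characteristic $3$. One must also check compatibility with the structural constraints ($\alpha_4=0$, the $\alpha_k\neq0$, $|\On|=3$, and $a=-1$ realizable), which translate into conditions on $c_0,c_1,c_2,c_3$; I expect these to be satisfiable whenever the naive collision conditions are, but verifying it is the main technical obstacle. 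Finally, to pin down that $3\in v(\mathcal{F}_{q,3})$ when $q\not\equiv0\bmod3$ one exhibits a single coincidence (e.g. $x_2+x_3=2x_3-x_2$, i.e. $2x_2=x_3$, which needs only $p\neq2$, already assumed) and checks the remaining three values are distinct and nonzero; and the value $q$ (permutation) is attained iff the four exceptional values can be made to fill in exactly the three missing affine values plus coincide appropriately, which the residue analysis shows fails precisely when $q\equiv2\bmod3$. The main obstacle throughout is keeping the two layers of constraints — the combinatorial collision pattern of the value set, and the algebraic realizability constraints on the $c_i$ — simultaneously under control; once that bookkeeping is done, each of the three cases of the statement follows by assembling the attainable cardinalities.
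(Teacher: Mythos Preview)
Your overall strategy --- split into $a=-1$ (small value sets) and $a\neq -1$ (large value sets), compute the exceptional values $F(x_i)$, and analyse their collisions --- is exactly the paper's approach. The only cosmetic difference is that you parametrise by the poles $x_2,x_3$ directly (with $x_3=b$ when $a=-1$), whereas the paper works with the coefficients $c_0,c_1,c_2$ and reduces everything to conditions on the single parameter $d=c_1c_2$; the two parametrisations are equivalent via the procedure recalled in Section~4.

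There are, however, two genuine gaps in your execution.

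\emph{First}, you show that $|V_F|=2$ forces characteristic~$3$, but you never argue the companion exclusion, namely that $|V_F|=3$ is \emph{impossible} when $q\equiv 0\bmod 3$. This is not automatic: in your parametrisation the four candidate values are $0,\,x_3,\,x_2+x_3,\,2x_3-x_2$, and in characteristic~$3$ one has $2x_3-x_2=-x_3-x_2$, so that any single coincidence among them (say $x_2+x_3=0$) immediately forces a second one ($-x_3-x_2=0$), collapsing the set to two elements. You need to run through all admissible coincidences and check this cascading always occurs. The paper handles this by showing that $|V_F|<4$ happens exactly when $c_1c_2\in\{-2,-1/2,1\}$, and in characteristic~$3$ these three values coincide and all give $|V_F|=2$.

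\emph{Second}, your treatment of the permutation case $|V_F|=q$ is too vague to count as a proof, and contains a slip (there are three exceptional values $F(x_1),F(x_2),F(x_3)$, not four). The condition for $F$ to be a bijection is that $\{F(x_1),F(x_2),F(x_3)\}$ coincide, as a set, with the three values $\{(a+1)x_i+b:1\le i\le 3\}$ missing from $F(\Fq\setminus\On)$. In the paper's parametrisation this set equality reduces cleanly to the single equation $d^2+d+1=0$ with $d=c_1c_2$, which is solvable precisely when $q\not\equiv 2\bmod 3$; without isolating this (or an equivalent) equation, the ``residue analysis'' you allude to has no content. Similarly, the attainability of $q-3$ and $q-2$ in every residue class requires explicit witnesses (the paper gives them case by case), not a dimension count.
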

\begin{proof}
When $F \in \mathcal{F}_{q,3}$, the corresponding polynomial
$f(x)=P_3(c_0,c_1,c_2,c_3;x)$ in (\ref{eq:f}) satisfies
$a=c_0(c_1c_2+1)^{2}, ~b= c_2(c_1c_2+1), ~c_3= -c_1/(c_1c_2+1).$

We first focus on small value sets, i.e., $|V_F|=2,3,4$. As we have seen in Theorem 1, these values occur when $a=-1$ or $c_0=-1/(c_{1}c_{2}+1)^{2}$.
In this case $x_2=(c_1c_2+1)^{2}/c_1,~x_3=b=c_2(c_1c_2+1)$ and hence we get 
$F(x_2)=(c_1c_2+1)(2c_1c_2+1)/c_1,$ 
$F(x_3)=(c_1^2c_2^{2}-1)/c_1 $. Since $F(\Fq\setminus \mathbf{O_3})=\{x_3\}$, we have   $F(\mathbf{O_3})\cap F(\Fq\setminus \mathbf{O_3})=\emptyset.$ 

One can easily see that 
$|F(\mathbf{O_3})|\leq 3$ exactly when 
$c_1c_2 \in \{-2,-1/2,1\}$. For instance
when $c_1c_2=1$,  one gets $F(x_1)=F(x_3)=0$ and $F(x_2)=6 c_2$ while $F(\delta)=2c_2$ for 
$\delta \in \Fq \setminus \mathbf{O_3}$.  Hence  for char$(\Fq) \neq 3$ we obtain $V_F=\{0,2c_2,6c_2\}$ with the value set count $(v_0,v_1,v_2,v_{q-3})$, where $v_0=q-3$, $v_1=v_2=v_{q-3}=1$. 

For $c_1c_2=-1/2$ one gets $F(x_1)=F(x_2)=0$, $F(x_3)=3c_2/2$, which yields $V_F=\{0,3c_2/2,c_2/2\}$, since
$F(\delta)=c_2/2$ for every $\delta \in \Fq\setminus\mathbf{O_3}$. Note that $|F^{-1}(0)|=2$, $|F^{-1}(3c_2/2)|=1$ and $|F^{-1}(c_2/2)|=q-3$ when char$(\Fq)\neq 3$ and hence the value set count is $(v_0,v_1,v_2,v_{q-3})$, where $v_0=q-3$, $v_1=v_2=v_{q-3}=1$.
The case $c_1c_2=-2$ can be dealt with similarly to yield $V_F=\{0,-3c_2/2,-c_2\}$, so that $|V_F|=2$ or $3$ depending on char$(\Fq)$.   

When $d=c_1c_2 \notin \{-2,-1/2,0,1\}$, one obtains polynomials $F$ with the value set 
$V_F=\{0,(d+1)(2d+1)/c,(d^{2}-1)/c,d(d+1)/c\}$, with $c=c_1$.

For the large value sets of sizes $q-3,q-2$ and $q$ that are obtained when $c_0 \neq -1/(c_1c_2+1)^2$, we consider the cases 
\begin{itemize}
\item[(i)] $c_0=-1/(c_1c_2+1)$,  implying $F(x_1)\cap F(\Fq\setminus \mathbf{O_3})=\emptyset$,
\item[(ii)] $c_0=1/c_1c_2(c_1c_2+1)^2$,  implying $F(x_2)\cap F(\Fq\setminus \mathbf{O_3})=\emptyset$,
\item[(iii)] $c_0=-(c_1c_2)/(c_1c_2+1)^3$,  implying $F(x_3)\cap F(\Fq\setminus \mathbf{O_3})=\emptyset$.
\end{itemize}

When $q \equiv1\mod 3$ or char$(\Fq)=3$ the equation $c_1^2c_2^2+c_1c_2=-1$ has a solution and hence the equations (i),(ii),(iii) above are simultaneously satisfied. By straightforward calculations one can see in this case that $|F(\mathbf{O_3})|=3$, and with $F(\mathbf{O_3})\cap F(\Fq\setminus \mathbf{O_3})=\emptyset$, one obtains $|V_F|=q$. 

When $q\equiv 0 \mod 3$ and $c_0=1/(c_{1}c_{2}+1)^3$ we have $F(x_2)=F(x_3)=F(\delta_1)$ for a unique $\delta_1 \in \Fq\setminus\mathbf{O_3}$. Moreover, if $(c_{1}c_{2}+1)^2 \neq -1$ then there exists a unique $\delta_2\in \Fq \setminus \mathbf{O_3}$ with $F(x_1)=F(\delta_2)$. Therefore we get $|V_F|=q-3$.  In this case $V_F=\Fq\setminus\{cd(d-1),cd(-d^2-1),cd^2(-d+1)\}$ where $d=c_1c_2+1$ and $c=1/c_1$ is arbitrary in $\Fq^*$. Otherwise if $(c_{1}c_{2}+1)^2 = -1$, that is $q\equiv 9 \mod 12$, then we have $|V_F|=q-2$.  In this case we have $V_F=\Fq\setminus\{c(-1-d),c(d-1)\}$ where $d=c_1c_2+1$ and $c=1/c_1$ is arbitrary in $\Fq^*$. 

When $q\equiv 1 \mod 3$, $c_{0}=1/(c_{1}c_{2}+1)^2$ and $c_{1}c_{2} \notin\{-2,-1/2,1\}$ then there are uniquely determined  $\delta_1,\delta_2,\delta_3 \in \Fq \setminus \mathbf{O_3}$ with $F(x_1)=F(\delta_1)$, $F(x_2)=F(\delta_2)$ and $F(x_3)=F(\delta_3)$. Hence we get $|V_F|=q-3$. In this case $V_F=\Fq\setminus\{cd(d-1),cd(-d-1),cd(-d+1)\}$ where $d=c_1c_2+1$ and $c=1/c_1$ is in $\Fq^*$. On the other hand,  if $c_1c_2=1$ then there exist uniquely determined elements $\delta_1, \delta_2 \in \Fq \setminus \mathbf{O_3}$ with $F(x_1)=F(\delta_1)$, $F(x_3)=F(\delta_2)$ and all other values are attained once. In this case we get $V_F=\Fq\setminus\{2c,-6c\}$ where $c=1/c_1$ is in $\Fq^*$. Hence $|V_F|=q-2$.

When  $q\equiv 2\mod3$ we also get $|V_F|=q-2$ or $q-3$. There are various possibilities for the values $q-2$ or $q-3$ 
to be attained. For instance if $q \equiv 5 \mod 12$, 
$c_0=-1/(c_1c_2+1)$ and $(c_1c_2+1)^2=-1$, 
then there exists a unique $\delta \in \Fq \setminus \mathbf{O_3}$ with $F(\delta)=F(x_2)=F(x_3).$ In this case $V_F=\Fq\setminus\{-c-cd,-c+cd\}$ where $d=(c_1c_2+1)$ and $c=1/c_1$ is 
in $\Fq^*$. 

If, on the other hand, $q \equiv 11 \mod 12$, $c_0=1/c_1c_2(c_1c_2+1)$, and $c_1c_2 \notin\{ 0, -1, -1/2\}$,
then there exist uniquely determined $\delta_1,\delta_2 \in \Fq \setminus \mathbf{O_3}$ such that $F(\delta_1)=F(x_1)=F(x_2)=0$ and $F(\delta_2)=F(x_3)$. In this case $V_F=\Fq\setminus\{cd(d+1),-c(d+1)^2,-d^2c\}$ where $d=c_1c_2$ and $c=1/c_1$. is 
in $\Fq^*$. 

\end{proof}

\begin{cor}
\label{complete}
Depending on $q$, the following polynomials have value sets of cardinalities $q, q-2$, and $q-3$.

\begin{itemize}
\item[(i)] Let $q \equiv1\mod 3$ 
. 
Any polynomial $F(x)$ of the form 
$$F(x)= \left (\left (\left (\left (\dfrac{-x}{d+1}\right )^{q-2}+c\right )^{q-2}+\dfrac{d}{c}\right )^{q-2}-\dfrac{c}{d+1}\right )^{q-2}+x $$
is a permutation, if $d$ satisfies $d^2+d+1=0$, in other words $d$ 
is a primitive third root of unity, and $c \in \Fq^*$ is arbitrary. 
Note that $F(\delta)=f(\delta)+\delta$ for $\delta \in \Fq$, where $f$ is obtained by altering
$g(x)=-(d+1)x+(d^2+d)/c$ at three elements; $x_1=0, x_2=(d+1)/c, x_3=d/c$, as in (\ref{eq:Pqn}).
\item[(ii)] Let $q \equiv5\mod 12$. Any polynomial $F(x)$ of the form
$$F(x)= \left (\left (\left (\left (\dfrac{-x}{d+1}\right )^{q-2}+c\right )^{q-2}+\dfrac{d}{c}\right )^{q-2}-\dfrac{c}{d+1}\right )^{q-2}+x $$
with $(d+1)^2=-1, ~c \in \Fq^*$ arbitrary, 
has the value set $V_F=\Fq\setminus\{d/c, (-d-2)/c \}$. Here $m(-1/c)=3$, and $m(\alpha)=1$, for all $\alpha \in V_F, \alpha \neq -1/c.$
Hence the maximum value count is $3$.
\item[(iii)] Let $q \equiv11 \mod 12$. Any polynomial $F(x)$ of the form
$$F(x)= \left (\left (\left (\left (\dfrac{x}{d(d+1)}\right )^{q-2}+c\right )^{q-2}+\dfrac{d}{c}\right )^{q-2}-\dfrac{c}{d+1}\right )^{q-2}+x $$
has the value set $V_F=\Fq\setminus\{d(d+1)/c, -(d+1)^2/c, -d^2/c \}$, where
 $c \in \Fq^*$ is arbitrary and $d \notin \{-1, -1/2, 0 \}.$ Here 
$m((-d^2-d-1)c)=2, m(0)=3$ and $m(\alpha)=1$ for all the other $q-5$ elements $\alpha \in V_F.$
Hence the maximum value count is $3$.  
The value set count is $(v_0,v_1,v_2,v_3)$ where $v_0=3$, $v_1=q-5$, $v_2=1$, $v_3=1$.

\end{itemize}
\end{cor}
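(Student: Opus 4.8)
The plan is to obtain all three parts as immediate specializations of the case analysis carried out in the proof of Theorem~\ref{n=4}, since each displayed polynomial is exactly $P_3(c_0,c_1,c_2,c_3;x)+x$ for a suitable choice of parameters. The first step is to fix the dictionary: reading off the four nested $(q-2)$-th powers gives, in every case, $c_1=c$ and $c_2=d/c$, so that $c_1c_2=d$ and $c_1c_2+1=d+1$; the prescribed last coefficient $c_3=-c/(d+1)$ is then precisely $-c_1/(c_1c_2+1)$, which is the condition needed for $P_3$ to arise from a permutation $f$ as in~(\ref{eq:Pqn}), with $g(x)=ax+b$, $a=c_0(d+1)^2$, $b=(d/c)(d+1)$. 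It is convenient to also have, from~(\ref{alpha-beta}), the poles $x_1=0$, $x_2=-1/(c_0c_1)$, $x_3=-c_2/\big(c_0(c_1c_2+1)\big)$ and the values $F(x_1)=0$, $F(x_2)=b+x_2$, $F(x_3)=ax_2+b+x_3$ on hand for substitution.

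For part~(i), substituting $c_0=-1/(d+1)$ yields $a=-(d+1)$, $b=(d^2+d)/c$, $x_2=(d+1)/c$, $x_3=d/c$, matching the closing sentence of the statement. The hypothesis $d^2+d+1=0$ is the identity $c_1c_2(c_1c_2+1)=-1$, and a short computation shows that for our $c_0$ this single identity forces conditions (i), (ii) and (iii) listed in the proof of Theorem~\ref{n=4} to be satisfied simultaneously; hence $|F(\mathbf{O_3})|=3$, $F(\mathbf{O_3})\cap F(\Fq\setminus\mathbf{O_3})=\emptyset$, and $|V_F|=q$, i.e., $F$ is a permutation. A primitive cube root of unity exists in $\Fq$ exactly when $q\equiv1\bmod3$, which is the stated restriction.

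For parts~(ii) and~(iii) the parameters are matched with the two subcases singled out in the proof of Theorem~\ref{n=4}. In~(ii) we have $c_0=-1/(d+1)=-1/(c_1c_2+1)$, and $(d+1)^2=-1$ is the condition $(c_1c_2+1)^2=-1$ of the $q\equiv5\bmod12$ subcase; translating the value set obtained there (where the local notation $d$ stands for $c_1c_2+1$ and $c$ for $1/c_1$) gives $V_F=\Fq\setminus\{d/c,(-d-2)/c\}$, and substituting into $F(x_2)=b+x_2$ and $F(x_3)=ax_2+b+x_3$ gives $F(x_2)=F(x_3)=(d+1)^2/c=-1/c$, the value to which the proof of Theorem~\ref{n=4} attaches the extra preimage $\delta\in\Fq\setminus\mathbf{O_3}$; hence $m(-1/c)=3$ and every other value of $V_F$ is attained once. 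In~(iii) we have $c_0=1/\big(d(d+1)\big)=1/\big(c_1c_2(c_1c_2+1)\big)$ with $d\notin\{-1,-1/2,0\}$, which is exactly the $q\equiv11\bmod12$ subcase; its excluded set translates verbatim to $\{d(d+1)/c,-(d+1)^2/c,-d^2/c\}$, the coincidences $F(\delta_1)=F(x_1)=F(x_2)=0$ give $m(0)=3$, and a direct evaluation gives $F(x_3)=-(d^2+d+1)/c$, which together with its extra preimage $\delta_2$ has multiplicity $2$; counting the $q$ preimages then yields the value set count $(v_0,v_1,v_2,v_3)=(3,q-5,1,1)$.

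The work is almost entirely bookkeeping; the one point requiring attention is to keep the corollary's $d=c_1c_2$ carefully distinct from the $d=c_1c_2+1$ that appears in the $q\equiv5\bmod12$ portion of the earlier proof, and to recheck the explicit evaluations of the $F(x_i)$ against the claimed multiplicities. The genuinely substantive facts — that the extra preimages $\delta$, $\delta_1$, $\delta_2$ indeed lie in $\Fq\setminus\mathbf{O_3}$, are pairwise distinct, and that no further coincidences occur among the values — are already established inside the proof of Theorem~\ref{n=4}, so here they are merely invoked; I anticipate no obstacle beyond this reconciliation of notation.
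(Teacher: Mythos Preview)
Your proposal is correct and follows exactly the paper's approach: the paper's entire proof reads ``By putting $c_1c_2=d$ and $c=c_1$ the results follow immediately from the proof of the above theorem,'' and you carry out precisely this substitution, only with the bookkeeping spelled out in full (including the careful reconciliation of the two conflicting uses of $d$ and $c$ in the $q\equiv 5\bmod 12$ subcase). Your explicit evaluation $F(x_3)=-(d^2+d+1)/c$ in part~(iii) is correct and in fact reveals a minor typo in the corollary's statement, where the multiplicity-$2$ element is printed as $(-d^2-d-1)c$ rather than $(-d^2-d-1)/c$.
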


\begin{proof}
By putting $c_1c_2=d$ and $c=c_1$ the results follow immediateley from the proof of the above theorem. 
\end{proof}

\begin{rem}
\label{comp} 
The polynomial $F(x)$ 
 in \textit{(i)} in the above corollary, and $F(x)-x=f(x)$
 are both permutations, i.e., $f(x)$ is a \textit{complete} mapping.
 
 \end{rem}
 
 \begin{cor} Any polynomial $F(x)$ of the form
$$F(x)= \left (\left (\left (\left (\dfrac{-x}{(d+1)^2}\right )^{q-2}+c\right )^{q-2}+\dfrac{d}{c}\right )^{q-2}-\dfrac{c}{d+1}\right )^{q-2}+x $$
has the value set $V_F=\{0, (d^2-1)/c, (d+1)d/c, (d+1)(2d+1)/c \}$, where $c \in \Fq^*$ is arbitrary and $d \notin \{-2,-1/2, 0, 1 \}$.
The value set count is $(v_0,v_1,v_{q-3})$ with $v_0=q-4$, $v_1=3$, $v_{q-3}=1$.
\end{cor}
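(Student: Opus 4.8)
The plan is to specialize the final case treated in the proof of Theorem~\ref{n=4}, namely the subcase of small value sets in which $d=c_1c_2\notin\{-2,-1/2,0,1\}$, and to translate the abstract data $(c_0,\ldots,c_3)$ into the explicit fractional-linear shape displayed in the statement. First I would recall from the opening of that proof that a polynomial $F\in\mathcal F_{q,3}$ lands among the small value sets precisely when $a=-1$, i.e. $c_0=-1/(c_1c_2+1)^2$; writing $c=c_1$ and $d=c_1c_2$ this forces the first coefficient of $P_3$ to be $-1/(d+1)^2$, the subsequent ones to be $c_1=c$, $c_2=d/c$, and $c_3=-c_1/(c_1c_2+1)=-c/(d+1)$. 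Substituting these into $P_3(c_0,c_1,c_2,c_3;x)+x$ gives exactly the displayed polynomial $F(x)$, so the form in the statement is nothing but the reduction, with $c\in\Fq^*$ free and $d$ unconstrained apart from the exclusions.

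Next I would read off the value set. From the proof of Theorem~\ref{n=4} we already have $g(x)=-(d+1)x + d/c\cdot(d+1)$ (using $b=c_2(c_1c_2+1)=d(d+1)/c$), the pole set $\mathbf{O_3}=\{x_1,x_2,x_3\}$ with $x_1=0$, $x_2=(c_1c_2+1)^2/c_1=(d+1)^2/c$, $x_3=b=d(d+1)/c$, and the images $F(x_1)=0$, $F(x_2)=(d+1)(2d+1)/c$, $F(x_3)=(d^2-1)/c$, while $F(\delta)=d(d+1)/c$ for every $\delta\in\Fq\setminus\mathbf{O_3}$ since $a=-1$ makes $F$ constant off $\mathbf{O_3}$. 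Because the exclusions $d\notin\{-2,-1/2,0,1\}$ are exactly the conditions under which $0,(d+1)(2d+1)/c,(d^2-1)/c,d(d+1)/c$ are pairwise distinct, we get $|V_F|=4$ and $V_F=\{0,(d^2-1)/c,(d+1)d/c,(d+1)(2d+1)/c\}$, matching the claim. The only genuine verification here is the four-way distinctness: one checks that equality of any two of these four values reduces to $d$ being one of the excluded elements (for instance $F(x_2)=F(x_3)$ gives $d=-2$, $F(x_1)=$ the off-pole value gives $d\in\{0,-1\}$ but $-1$ is already impossible since $d+1$ must be invertible, and so on), which is a short finite-field computation.

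Finally, for the value set count, note that the off-pole value $d(d+1)/c$ is attained by the $q-3$ elements of $\Fq\setminus\mathbf{O_3}$ and, by distinctness, by none of $x_1,x_2,x_3$; hence $m\big(d(d+1)/c\big)=q-3$. Each of the remaining three elements $0$, $(d^2-1)/c$, $(d+1)(2d+1)/c$ is hit exactly once, by $x_1$, $x_3$, $x_2$ respectively. Therefore $v_{q-3}=1$, $v_1=3$, and $v_0=q-|V_F|=q-4$, giving the stated vector $(v_0,v_1,v_{q-3})=(q-4,3,1)$. I expect no serious obstacle: the entire argument is a specialization already essentially carried out inside the proof of Theorem~\ref{n=4}, and the only point requiring a little care is confirming that the excluded set $\{-2,-1/2,0,1\}$ is exactly the locus of collisions among the four candidate values, so that $|V_F|=4$ genuinely holds for all admissible $d$ (and, implicitly, that $d+1\neq0$, which is automatic because $c_0=-1/(d+1)^2$ must be defined and nonzero).
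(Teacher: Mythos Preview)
Your proposal is correct and follows exactly the approach implicit in the paper: the corollary is obtained from the small--value--set case of Theorem~\ref{n=4} by the substitution $c=c_1$, $d=c_1c_2$ (so $c_0=-1/(d+1)^2$, $c_2=d/c$, $c_3=-c/(d+1)$), and the value set and count are read off directly from that analysis. One harmless slip: you write $g(x)=-(d+1)x+\cdots$, but in fact $a=-1$ here (as you yourself use a few lines later), so $g(x)=-x+d(d+1)/c$; this does not affect the argument.
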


\section{Polynomials with prescribed $|V_f|$}

We first note that the polynomials $F \in \Fqn$ can be easily described as in (3), when $n$ is small enough and $\On$ is given. We summarize the procedure given in \cite{CeMeTo2008}, for the sake of completeness. Recall that $x_n=-b/a=-\beta_n/\alpha_n$. Given $g(x)=ax+b$, and $\On$, we can therefore use the equations (\ref{alpha-beta}) to calculate $c_0, c_1, \ldots, c_n$, and hence $F(x)=P_n(c_0, \ldots, c_n:x)+x.$ Put $\alpha_n=\epsilon a,~\beta_n=\epsilon b$, $\alpha_{n+1}=0$
and $\beta_{n+1}= \epsilon$, for a variable $\epsilon$. From (\ref{alpha-beta}) one gets 
$$c_{i}=\frac{\beta_{i+1}+x_{i-1}\alpha_{i+1}}{\beta_{i}+x_{i-1}\alpha_{i}},~2\leq i \leq n .$$
Therefore one can 
recursively calculate the exact
values for $c_{n},c_{n-1},\ldots,c_{3}$, and values for 
$\alpha_{n-1},\beta_{n-1},\ldots,\alpha_{2},\beta_{2}$ as multiples
of $\epsilon$. In the final step,  
using 
$\beta_{0}=1$, $\beta_1=0$,
and hence
$\beta_{2} =
1$, one obtains the value for $\epsilon$.
Then one can find $c_{2}=\beta_{3}$, $\alpha_1=\alpha_3-c_2\alpha_2$, $c_1=\alpha_2/\alpha_1$  and
$c_{0} =\alpha_{1}$. 

The following theorem gives examples of polynomials $F$ with
small and large value sets, depending on $q,~n$.

\begin{thm} 
\label{thm:q-n}
The following choices of $g(x)$ and $\On$ yield polynomials $F \in \Fqn$ with
$|V_F|= n+1,~|V_F|=q-n$, $|V_F| \geq q-n$, where the maximum count for $V_F$ is two,
 and $|V_F|=2,~3$.
\begin{itemize}
\item [(i)] Let $q=p^r, ~p>n(n+1)$. Then the polynomial $F(x)$ as in (\ref{F}),
with $a=-1,~ b=n(n-1)/2$ and 
$\On= \{x_i: x_1=0,~x_i=x_{i-1}+i-1, ~2 \leq i \leq n \}$ has $|V_F|=n+1.$
The value set count for $F$ is $(v_0, v_1, v_{q-n})$, where $v_0= q-n-1, ~v_1=n, ~v_{q-n}=1.$
\item [(ii)] 
Let $q \equiv 1 \mod n$, and $a \in \Fq^*$, with \hbox{\rm ord}$(-a)=n$. Then for any $b \in \Fq^*$ and
$\On= \{x_i: x_i=b\sum_{j=0}^{n-i}(-1/a)^{j+1}, ~1 \leq i \leq n  \}$,
the polynomial $F(x)$ as in (\ref{F}) has $|V_F|=q-n.$
The value set count for $F$ is $(v_0, v_1, v_{n+1})$, where $v_0= n, ~v_1=q-n-1, ~v_{n+1}=1.$
\item [(iii)] 
Let $q \equiv 1 \mod 2n, ~a, b \in \Fq^*$ with \hbox{\rm ord}$(a)= 2n.$ Put $z_i=ba^{i-1}$, and $x_i= (z_i-b)/(a+1)$ for $1 \leq i \leq n.$
Then the polynomial $F(x)$ as in (\ref{F}) has $|V_F| \geq q-n$, and the maximum count for $V_F$ is at most $2$.
\item [(iv)]
Let $q=p^r, ~p>n-2.$
Then the polynomial $F(x)$ as in (\ref{F}),
with $a=-1,~ b \in \Fq^*$ and 
$\On= \{x_i: x_i=(1-i)b, ~1 \leq i \leq n-1,~ x_n=b\}$ has $V_F=\{0, b, nb\}$, with $m(0)=n-1, ~m(b)=q-n, ~m(nb)=1.$

\end{itemize}
\end{thm}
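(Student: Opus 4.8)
The plan is to handle all four items by one common scheme. For each prescribed pair $g(x)=ax+b$ and $\On$ I would first check that $\On$ is an admissible pole set, i.e.\ that it consists of $n$ distinct elements with $x_1=0$ and $x_n=-b/a$; then, since every $F\in\Fqn$ is completely described by (\ref{F}), I would evaluate $F$ on $\Fq\setminus\On$ and on the $x_i$ explicitly and read off $V_F$ together with the fibre sizes $m(\beta)=|F^{-1}(\beta)|$. The arithmetic hypothesis on $q$ in each item enters in exactly two places: to guarantee the distinctness needed for $|\On|=n$, and to decide which among the finitely many ``exceptional'' values produced by $\On$ actually coincide.

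In items (i) and (iv) we have $a=-1$, so (\ref{F}) collapses to $F(\delta)=b$ for every $\delta\notin\On$, and it remains to evaluate $F(x_i)=-x_{i-1}+x_i+b$ on $\On$. In (i) the chosen $x_i$ telescope, $x_i-x_{i-1}=i-1$, so $F(x_1)=0$ and $F(x_i)=b+i-1$ for $2\le i\le n$, whence $V_F=\{0\}\cup\{b,b+1,\ldots,b+n-1\}$; the inequality $p>n(n+1)$ makes $\tfrac{n(n-1)}{2},\tfrac{n(n-1)}{2}+1,\ldots,\tfrac{n(n-1)}{2}+n-1$ all lie strictly between $1$ and $p-1$, so these $n+1$ elements are pairwise distinct, and the fibre count ($m(b)=q-n$, while $0$ and each $b+i-1$ are attained once) gives $(v_0,v_1,v_{q-n})=(q-n-1,n,1)$. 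Item (iv) is the same telescoping with $x_i-x_{i-1}=-b$ for $2\le i\le n-1$ and $x_n-x_{n-1}=(n-1)b$, giving $F(x_i)=0$ for $1\le i\le n-1$ and $F(x_n)=nb$, so $V_F\subseteq\{0,b,nb\}$; here $p>n-2$ is used only for $|\On|=n$, and the multiplicities $m(0)=n-1$, $m(b)=q-n$, $m(nb)=1$ follow at once, with the degenerate collapses ($p\mid n$, giving $|V_F|=2$, or $p\mid n-1$) recorded separately.

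In items (ii) and (iii) we have $a\neq-1$ (forced by $\mathrm{ord}(-a)=n\ge2$, resp.\ by $\mathrm{ord}(a)=2n$), so $F$ is injective on $\Fq\setminus\On$ with image $\Fq\setminus\{(a+1)x_i+b:1\le i\le n\}$, and the point of the explicit formulas for the $x_i$ is that they linearise $F$ on $\On$. For (ii), writing $t=-1/a$ (a primitive $n$-th root of unity, since $\mathrm{ord}(t)=\mathrm{ord}(-a)=n$), a geometric-series computation gives $(a+1)x_i+b=bt^{n-i+1}$, so the omitted image set is precisely the coset $b\mu_n$ of the group $\mu_n$ of $n$-th roots of unity; moreover $g(x_i)=-x_{i+1}$, hence $f(x_i)=g(x_{i-1})=-x_i$ and $F(x_i)=0$ for all $i$. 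Thus $V_F=\Fq\setminus b\mu_n$ (using $0\notin b\mu_n$), so $|V_F|=q-n$, and $F^{-1}(0)=\On\cup\{-b/(a+1)\}$ yields $(v_0,v_1,v_{n+1})=(n,q-n-1,1)$. For (iii), one computes $(a+1)x_i+b=z_i=ba^{i-1}$ and $F(x_i)=2z_i/(a+1)$ for $2\le i\le n$ (with $F(x_1)=0$); the $z_i$ are $n$ distinct elements since $\mathrm{ord}(a)=2n>n$, hence so are the $F(x_i)$, so $F$ is injective on $\On$, $|V_F|\ge|\Fq\setminus\{z_i\}|=q-n$, and every fibre of $F$ meets $\Fq\setminus\On$ in at most one point and $\On$ in at most one point, so the maximum count is at most $2$.

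The routine arithmetic aside, the only genuine work is the verification step in (ii) and (iii): checking that the partial sums, resp.\ the geometric progression, defining the $x_i$ are pairwise distinct and satisfy $x_1=0$ and $x_n=-b/a$. This reduces to showing that a geometric sum $1+t+\cdots+t^m$ vanishes only when $n\mid m+1$ (for (ii)), resp.\ to the identity $a^n=-1$ (for (iii)); once these are in hand everything else is direct substitution into (\ref{F}). I expect the bookkeeping of the value set counts, not any conceptual point, to be the most error-prone part, especially in (iv), where one must keep track of the cases $p\mid n$ and $p\mid n-1$.
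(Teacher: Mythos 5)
Your proposal is correct and follows essentially the same route as the paper: evaluate $F$ via (\ref{F}) on $\On$ and on $\Fq\setminus\On$ separately, use $a=-1$ to collapse the linear part in (i) and (iv), show $F(\On)=\{0\}$ with an extra preimage $-b/(a+1)$ in (ii), and combine injectivity on both pieces in (iii). Your explicit identification of the omitted values in (ii) as the coset $b\mu_n$ and your explicit treatment of the degenerate cases $p\mid n$, $p\mid n-1$ in (iv) are slightly more detailed than the paper's ``straightforward calculations,'' but the argument is the same.
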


\begin{proof} 
\begin{itemize}
\item [\textit{(i)}]
Note that $x_n=b=n(n-1)/2.$ Since $a=-1$, we have, as in Theorem 1, $F(\delta)=b$ for every $\delta \in \Fq \setminus \On.$
The choice of $\On$ gives $|F(\On)|=n$ and $b \notin F(\On).$ Indeed $V_F= \{0\} \cup \{ b+i: 0 \leq i \leq n-1\}.$ The value set count
also follows trivially.

\item [\textit{(ii)}]
We show that by this choice of $\On$ we get $F(\On)=\{0\}$ and $F(\On) \cap F(\Fq \setminus \On)=\{0\}.$
Firstly note that $x_n=-b/a$,  and ord$(-1/a)=n$ implies $x_1=0$, $|\On|=n$, as required.
Recall that $F(x_i)=ax_{i-1}+x_i+b$ for $2 \leq i \leq n.$ Hence
$$F(x_i)=a((-b/a)\sum_{j=0}^{n-i+1}(-1/a)^j)+((-b/a)\sum_{j=0}^{n-i}(-1/a)^j)+b. $$ 
By straightforward calculations one gets $F(x_i)=0$ for every $1 \leq i \leq n.$
On the other hand there exists $\delta=-b/(a+1) \in \Fq^*$ with $F(\delta)=0.$
Note that $\delta \notin \On$, since $-b/(a+1)=(-b/a)\sum_{j=0}^{n-i}(-1/a)^j$ for some $1 \leq i \leq n$
yields a contradiction. 
Recalling that  
$F$ is linear on $\Fq \setminus \On$, we obtain $|V_F|=q-n.$ Note that
$m(0)=n+1$, and $m(\alpha)=1$ for all the other elements $\alpha \in V_F \setminus \{0\}.$

\item [\textit{(iii)}]
By this choice of $\On$, it is obvious that $F(x_i)\neq F(x_j)$ for $1\leq i \neq j \leq n.$
Therefore the only elements in $V_F$
with multiplicity $\geq 2$ can belong to the set $F(\On) \cap F(\Fq \setminus \On).$ However $F$ is a permutation on 
$\Fq \setminus \On$ and hence no element can have multiplicity exceeding $2$.

\item [\textit{(iv)}]
Clearly $F(x_i)= 0$ for $1 \leq i \leq n-1$ and $F(x_n)=nx_n=nb.$
The condition $a=-1$ implies $F(\Fq \setminus \On)=\{x_n\}=\{b\}.$ Therefore $|V_F|=2$ or $3$, depending on the characteristic $p$.
\end{itemize}
\end{proof}

\begin{rem}
\label{maximumcount}
The polynomials in Theorem \ref{thm:q-n}, parts \textit{(ii)} and \textit{(iii)} are evenly distributed in the following sense. 
When $F(x)$ is as in part \textit{(ii)}, 
the multiplicity
of any
non-zero element in $V_F$ 
is one.  
When $F(x)$ is as in part \textit{(iii)}, the maximum count for $V_F$ is two. 
\end{rem}

The following result is a corollary of Theorem \ref{thm:q-n}. We state it as a theorem
since it may be of independent interest. 

\begin{thm}Let $U$ be a subgroup of the multiplicative group $\Fq^*.$
Suppose that $|U|=n,~U=<\alpha>$, and $c \in \Fq^*,~c \neq 1$. Then the polynomial 
$F \in \Fqn$, defined as in (\ref{F}), with $a=-1/\alpha$ and $b=-ac$ has 
$V_F= \Fq \setminus cU$.
\end{thm}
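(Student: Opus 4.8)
The plan is to deduce this directly from Theorem~\ref{thm:q-n}\textit{(ii)}, since the hypotheses are engineered to match. First I would verify that the parameter $a=-1/\alpha$ satisfies $\mathrm{ord}(-a)=\mathrm{ord}(1/\alpha)=\mathrm{ord}(\alpha)=n$, and that $q\equiv 1\bmod n$ (which holds because $\Fq^*$ has a subgroup of order $n$). Thus the setup of Theorem~\ref{thm:q-n}\textit{(ii)} applies with this $a$ and with $b=-ac=c/\alpha\in\Fq^*$, and the corresponding set $\On=\{x_i:x_i=b\sum_{j=0}^{n-i}(-1/a)^{j+1},\ 1\le i\le n\}$. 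By that theorem, $|V_F|=q-n$ and the unique element of multiplicity $n+1$ is $0$, every other value being attained exactly once; equivalently $V_F$ is the complement in $\Fq$ of exactly $n$ elements.

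The core of the argument is then to identify which $n$ elements are missing, i.e.\ to show $\Fq\setminus V_F = cU$. Since $F$ is linear on $\Fq\setminus\On$ — namely $F(\delta)=(a+1)\delta+b$ there — and $F(\On)=\{0\}$ with $0$ already in the image of the linear part (at $\delta=-b/(a+1)$), the value set $V_F$ is exactly $(a+1)(\Fq\setminus\On)+b$. Hence the missing set is the image of $\On$ under the affine map $\delta\mapsto(a+1)\delta+b$, except that $\delta=-b/(a+1)\notin\On$ must be accounted for: more precisely, $\Fq\setminus V_F=\{(a+1)x_i+b:1\le i\le n\}$, a set of $n$ elements since $a+1\ne 0$ (as $c\ne 1$ forces $a=-1/\alpha\ne-1$... actually $a=-1$ would need $\alpha=1$, excluded since $n\ge 2$; and one must also check $a\ne -1$ is not otherwise forced, but $\mathrm{ord}(\alpha)=n\ge 2$ already gives $\alpha\ne 1$ hence $a\ne -1$). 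So the remaining task is the computation $(a+1)x_i+b$ for $i=1,\dots,n$.

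For that step I would substitute $x_i=b\sum_{j=0}^{n-i}(-1/a)^{j+1}$ and use the geometric-sum identity $(-1/a-1)\sum_{j=0}^{n-i}(-1/a)^{j} = (-1/a)^{n-i+1}-1$, so that $(a+1)x_i = -b\big((a+1)/a\big)\sum_{j=0}^{n-i}(-1/a)^j\cdot(-1) $; after simplification with $b=c/\alpha=-ac$ one should get $(a+1)x_i+b = c\alpha^{\,n-i}$ or a similar power of $\alpha$, and as $i$ runs over $1,\dots,n$ the exponents run over a complete set of residues mod $n$, yielding $\{c\alpha^k:0\le k\le n-1\}=cU$. The main obstacle is purely bookkeeping: keeping the signs and the powers of $-1/a=\alpha$ straight through the geometric summation and confirming that $b=-ac$ is precisely the normalization that turns the affine image of $\On$ into the coset $cU$ rather than some scalar multiple of it. Once that identity $\{(a+1)x_i+b:1\le i\le n\}=cU$ is established, combining it with $V_F=\Fq\setminus\{(a+1)x_i+b:1\le i\le n\}$ from Theorem~\ref{thm:q-n}\textit{(ii)} completes the proof that $V_F=\Fq\setminus cU$.
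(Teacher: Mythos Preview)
Your approach is correct and essentially the same as the paper's: both identify the missing values as $\{(a+1)x_i+b:1\le i\le n\}$ via Theorem~\ref{thm:q-n}\textit{(ii)} and then evaluate this set by the geometric-sum identity to obtain $\{(-b/a)(-1/a)^{n-i}:1\le i\le n\}=\{c\alpha^{n-i}:1\le i\le n\}=cU$. Your write-up is more explicit about checking the hypotheses (that $\mathrm{ord}(-a)=n$, $q\equiv 1\bmod n$, and $a\neq -1$ from $n\ge 2$), but the substance of the computation is identical; note, incidentally, that the condition $c\neq 1$ plays no role in the argument, since $a\neq -1$ follows from $\alpha\neq 1$ rather than from the hypothesis on~$c$.
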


\begin{proof} 
For $a=-1/\alpha$ and $b=-ac$,
choose $\On$ 
as in the proof of Theorem \ref{thm:q-n}\textit{(ii)}. Then the missing values in $V_F$ are exactly
those, given by 
$G(x_i)=(a+1)x_i+b=(a+1)((-b/a)\sum_{j=0}^{n-i}(-1/a)^j)+b= (-b/a)(-1/a)^{n-i}$
for $1 \leq i \leq n$. 

\end{proof}

We end this note by an example, illustrating the procedure we explained in the beginning of this section. 

\begin{ex} 
\label{Ex2}
Let $p=13$, $n=4$ and
$g(x)=-x+2$. Then $x_4=2.$ Suppose that 
$x_{i}=(1-i)x_{4}$ for $i=2,3$, as in Theorem \ref{thm:q-n}, part \textit{(iv)}. Hence 
$V_F=\{0,2,8\}.$ To describe $F(x)$, we put $\alpha_4=-\epsilon,~\beta_4=2\epsilon,~ \beta_5=\epsilon.$
Recall that $\alpha_{5}=0$. Then $c_4=11$ and 
$\alpha_{3}=\alpha_{5}-c_{4}\alpha_{4}=-2\epsilon,  \quad
\beta_{3}=\beta_{5}-c_{4}\beta_{4}=5\epsilon.$
We obtain recursively
$c_{3}=(\beta_{4}+x_{2}\alpha_{4})/(\beta_{3}+x_{2}\alpha_{3})=12,
~ \alpha_{2}=\alpha_{4}-c_{3}\alpha_{3}=10\epsilon, ~
\beta_{2}=\beta_{4}-c_{3}\beta_{3}=7\epsilon=1. $
Therefore $\epsilon=2$,
$c_{2}=\beta_{3}=10$,
$\alpha_{1}=\alpha_{3}-c_{2}\alpha_{2}=4, ~
c_{1}=\alpha_{2}/\alpha_{1}=5 ~ \textrm{and} ~
c_{0}=\alpha_{1}=4,$
giving $$F(x)=\Big(\Big(\big(((4x)^{11}+5)^{11}+10\big)^{11}+12\Big)^{11}+11\Big)^{11}+x.$$
\end{ex}

\section{Acknowledgement}
L.I. was supported by a grant in scope of the TUBITAK project number 114F432. A.T. was partially supported by TUBITAK project number 114F432.


\begin{thebibliography}{999}

\bibitem{AkGhWa2009} A. Akbary, D. Ghioca, Q.Wang, 
\textit{On permutation polynomials of prescribed shape},
\emph{Finite Fields and Their Applications}, 15, (2009), 195-206.

\bibitem{aaaw2009} E. Aksoy, A.\c Ce\c smelio\u glu, W. Meidl, A. Topuzo\u glu,
\textit{On the Carlitz Rank of a permutation polynomial,}
\emph{Finite Fields and Their Applications}, 15, (2009),
428-440.

\bibitem{Biro2000} A. Bir\'{o},
\textit{On polynomials over prime fields taking only
two values on the multiplicative group,} \emph{Finite Fields and
Their Applications}, 6, (2000), no.4, 302-308.

\bibitem{BoCo2013} H. Borges, R. Concei\c{c}\~ao,
\textit{On the characterization of
minimal value set polynomials}, \emph{Journal of Number Theory},
133, (2013), 2021-2035.



\bibitem{CaLeMi1961} L. Carlitz, D. J. Lewis, W. H. Mills, E. G Straus,
\textit{Polynomials over finite fields with minimal value sets,}
\emph{Mathematika}, 8, (1961), 121-130.

\bibitem{ChGoMu1988} W.-S. Chou, J. Gomez-Galderon, G. L. Mullen,
\textit{Value sets of Dickson polynomials over finite fields,}
\emph{J. Number Theory}, 30, (1988), 334-344.

\bibitem{Chou} W.-S.Chou, J. Gomez-Galderon, G. L. Mullen, D.Panario, D. Thomson, 
\textit{Subfield value sets of polynomials over finite fields}, 
\emph{Func. Approx. Comment. Math. }, 48, (2013), part 1, 147-165.

\bibitem{Cusick} T. W. Cusick, \textit{Polynomials over base 2 finite fields with evenly distributed values},
\emph{Finite Fields and
Their Applications}, 11, (2005), 278-291. 


\bibitem{CeMeTo2008} A. \c Ce\c smelio\u glu, W. Meidl, A. Topuzo\u glu,
\textit{On the cycle structure of permutation polynomials.}
\emph{Finite Fields and Their Applications,} 14, (2008),
593-614.


\bibitem{Gomez88} J. Gomez-Calderon, 
\textit{A note on polynomials with minimal value set over finite fields,}
\emph{Mathematica}, 35, (1988),144-148.

\bibitem{GoMa1988} J. Gomez-Calderon, J. D. Madden,
\textit{Polynomials with small value set over finite fields,}
\emph{Journal of Number Theory}, 28, (1988), no.2, 167-188.


\bibitem{GoOsTo2014} D. Gomez-Perez, A. Ostafe, A. Topuzo\u glu,
\textit{On the Carlitz rank of permutations of $\Fq$ and pseudorandom sequences}, 
\emph{Journal of Complexity}, 30, (2014), 279-289.




\bibitem{MuPa2013} G. L. Mullen, D. Panario,
\textit{Handbook of finite fields,} \emph{Discrete Mathematics and
its Applications}, ISBN 9781439873786, Chapman and Hall / CRC Press,
(2013).


\bibitem{MuWaWa2014} G. L. Mullen, D. Wan, Q. Wang,
\textit{Index bounds for value sets of polynomials over finite
fields,} \emph{ Applied Algebra and Number Theory, Cambridge University Press}, edited by G. Larcher, F. Pillichshammer, A. Winterhof, C. Xing, (2014),
280-296.


\bibitem{PaTo2014} F. Pausinger, A. Topuzo\u glu,
\textit{Permutations of finite fields and uniform distribution modulo 1}, 
\emph{Algebraic Curves and Finite Fields}, edited by H. Niederrreiter, A. Ostafe, D. Panario and A. Winterhof, Radon Series on Applied and Computational Mathematics 16,  (2014), 145-160.

\bibitem{shparlinski} I. E. Shparlinski,
\textit{Finite fields: theory and computation The
meeting point of number theory, computer science, coding theory and
cryptography,} \emph{Mathematics and its Applications}, (1999), 477.

\bibitem{Topuzoglu} A. Topuzo\u glu, \textit{Carlitz rank of permutations of
finite fields: A survey}, \emph{Journal of Symbolic Computation},  64,
(2014), 53-66.

\bibitem{Wan1992} D. Wan, 
\textit{A p-adic lifting lemma and its application to permutation polynomials},
\emph{Finite Fields, Coding Theory and Advances in Communications and Computing},
  (1993), 209-216.


\end{thebibliography}
\end {document}